\newtheorem{theorem}{Theorem}[section]
\newtheorem{corollary}[theorem]{Corollary}
\theoremstyle{definition}
\newtheorem{definition}[theorem]{Definition}
\newtheorem{remark}[theorem]{Remark}
\newtheorem{example}[theorem]{Example}
\numberwithin{equation}{section}
\title{Asynchronous discrete dynamical systems}
\author{
Stefan Siegmund\\
Faculty of Mathematics, 
TU Dresden,
Germany\\
Email: stefan.siegmund@tu-dresden.de
\and
Petr Stehl\'\i k\\
Faculty of Applied Sciences,
University of West Bohemia,
Czech Republic\\
Email: pstehlik@kma.zcu.cz
}
\newcommand{\ty}{t^{*_\nu}}
\newcommand{\tx}{t^{*_\mu}}
\newcommand{\C}{\mathbb{C}}
\newcommand{\R}{\mathbb{R}}
\newcommand{\N}{\mathbb{N}}
\newcommand{\TS}{\mathbb{T}}
\newcommand{\T}{\mathbb{T}}
\newcommand{\Tx}{\TS_\mu}
\newcommand{\Ty}{\TS_\nu}
\newcommand{\TT}{\TS_T}
\newcommand{\Z}{\mathbb{Z}}
\DeclareMathOperator{\indicator}{\mathbbm{1}}
\begin{document}
\maketitle

\begin{abstract}
We study two coupled discrete-time equations with different (asynchronous) periodic time scales. The coupling is of the type sample and hold, i.e., the state of each equation is sampled at its update times and held until it is read as an input at the next update time for the other equation. We construct an interpolating two-dimensional complex-valued system on the union of the two time scales and an extrapolating four-dimensional system on the intersection of the two time scales. We discuss stability by several results, examples and counterexamples in various frameworks to show that the asynchronicity can have a significant impact on the dynamical properties.
\end{abstract}


\section{Introduction}

The notion of asynchronous control system \cite{Hassibi:Boyd:How1999} often denotes models for asynchronously occurring discrete events which trigger a continuous time system, e.g., control systems in which signals are transmitted over an asynchronous network. In this paper we coin the notion of asynchronous discrete dynamical system to denote models with two inherently different discrete time scales. Little is known about about linear discrete-time systems consisting of coupled components each of which has its own potentially different time scale.  This is despite the fact that asynchronous discrete-time phenomena are observed in the real world, both in the human society and in the animal kingdom. A number of disciplines within the social and natural sciences have attempted to formally explore these phenomena.

For example, in economics, asynchronous time scales arise naturally since \cite{T} \emph{``decisions by economic agents are reconsidered daily or hourly, while others are reviewed at intervals of a year or longer''}. These intervals are driven by various costs and benefits that individuals, companies and governments face if they want to reconsider their decisions \cite{K}. Therefore there exist doubts about the prevalence of a synchronous approach and it has been argued \cite{LM} that \emph{``\ldots the synchronized move is not an unreasonable model of repetition in certain settings, but it is not clear why it should necessarily be the benchmark setting.''}

Similarly, in biology, \cite{Murray2} examined, e.g., wolf activities occurring over yearly, seasonal and daily time scales, which significantly affects the modeling of wolf-deer interactions. Interestingly, periodically varying insect populations with distinct life cycle periods coexist in many regions around the world. Their periods range from the most commonly observed one year, through many species with periods of two or three years to periods of 13 and 17 years of periodic cicadas of genus \emph{Magicada}, see the survey paper \cite{HVS}. For other examples of coupled systems and models with different time scales in social and natural sciences, see, e.g., \cite{LM, LS1, LS2, SBCS}.

From the mathematical point of view, there has been a considerable interest in the role of timing structures both from the numerical as well as analytical point of view. A well-developed mathematical theory on dynamic equations on a single time-scale can be found, e.g., in \cite{BP, H}. Note that even the elementary notion of stability depends strongly on the underlying time scale \cite{PSW}. This effect is apparent when we explore asynchronous linear systems below, focusing on two periodic time scales with different periods in particular. 

There exist sporadic continuous-time approaches in which distinct periodic physical processes (e.g., mechanical, thermal, diffusion, chemical) are considered. The temporal homogenization technique \cite{AP,YF} considers such continuous phenomena in the special case when there co-exist fast and slow oscillatory processes with significantly different periods. The large ratio of the two periods allows to split the analysis in local and global problems. Our approach not only considers discrete time instead but is more general in the sense that arbitrary (not necessarily significantly different) periods are taken into account.

The paper is organized as follows. In $\S$\ref{sec:pf} we formulate two-dimensional asynchronous dynamical systems and introduce the necessary notation. In $\S$\ref{sec:linear:system} we associate an extended four-dimensional system to the asynchronous linear one and study the solution operator of the system. In $\S$\ref{sec:timeone} we analyze an interpolated dynamical system on a finer time scale. Consequently, we provide results and examples for various special cases in $\S$\ref{sec:synchronous}-$\S$\ref{sec:commensurable}. We conclude by formulating open questions and identifying directions for further research in $\S$\ref{sec:final:remarks}.


\section{Problem formulation -- 2 equations}\label{sec:pf}
For $\rho>0$, we define the periodic (or regular) one-sided discrete time scale
\begin{equation*}
   \TS_\rho 
   \coloneqq 
   \left\lbrace 0, \rho, 2\rho, \ldots \right \rbrace,
\end{equation*}
and the (forward) difference operator $\Delta_\rho \colon \mathbb{R}^{\TS_\rho} \to \mathbb{R}^{\TS_\rho}$ by
\begin{equation*}
   \Delta_\rho x(t) 
   = 
   \frac{x(t+\rho)-x(t)}{\rho}
   \qquad
   (t \in \TS_\rho),
\end{equation*}
for $x \colon \TS_\rho \to \mathbb{R}$.
The lag operator  $\cdot^{*\rho} \colon \mathbb{R}_{\geq 0} \to \mathbb{T}_\rho$ on $\mathbb{R}_{\geq 0} \coloneqq \{x \in \mathbb{R} \,|\, x \geq 0\}$ is defined by
\begin{equation*}
   t \mapsto t^{*\rho} 
   \coloneqq 
   \rho\left\lfloor \frac{t}{\rho} \right\rfloor 
   = 
   \max \left\lbrace s\in\mathbb{T}_\rho \colon s\leq t \right\rbrace
   \qquad
   (t \in \mathbb{R}_{\geq 0}),
\end{equation*}
and satisfies $t-t^{*\rho} \in [0,\rho)$ for $t \in \mathbb{R}_{\geq 0}$. 

Let $\nu > 0$. Using the lag operator $\cdot^{*\nu}$, a function $z_{\mathbb{T}_\nu} \colon \mathbb{T}_\nu \to \mathbb{R}$ on $\mathbb{T}_\nu$ can naturally be extended to a function $z_{\mathbb{R}_{\geq 0}} \colon \mathbb{R}_{\geq 0} \to \mathbb{R}$ on $\mathbb{R}_{\geq 0}$ by defining
\begin{equation*}
   z_{\mathbb{R}_{\geq 0}}(t)
   \coloneqq
   z_{\mathbb{T}_\nu}(t^{*\nu})
   \qquad
   (t \in \mathbb{R}_{\geq 0}).
\end{equation*}
The extended function $z_{\mathbb{R}_{\geq 0}}$ realizes the principle of \emph{sample and hold} in systems theory \cite[Section 1.4]{Ogata1995} and can be used in the analysis of discrete-time equations \cite{Slavik2012}. For every $\tau \in \mathbb{T}_\nu$ the value $z_{\mathbb{T}_\nu}(\tau)$ is sampled and held constant for one period $\nu$, i.e.,
\begin{equation*}
   z_{\mathbb{R}_{\geq 0}}(t)
   =
   z_{\mathbb{T}_\nu}(\tau)
   \qquad
   (t \in [\tau, \tau+ \nu)).
\end{equation*}
Let $\mu > 0$. The restriction $z_{\mathbb{T}_\mu} \coloneqq z_{\mathbb{R}_{\geq 0}}|_{\mathbb{T}_\mu}$ of $z_{\mathbb{R}_{\geq 0}}$ to $\mathbb{T}_\mu$, satisfies
\begin{equation*}
   z_{\mathbb{T}_\mu}(t)
   =
   z_{\mathbb{T}_\nu}(t^{*\nu})
   \qquad
   (t \in \mathbb{T}_\mu),
\end{equation*}
and reads the value of $z_{\mathbb{T}_\nu}$ which was sampled at $t^{*\nu}$ and held until $t \in \mathbb{T}_\mu$. 

We define an asynchronous discrete dynamical system in the following way. Consider periods $\mu, \nu > 0$ and the matrix
\begin{equation*}
  P 
  = 
  \begin{pmatrix}
    \alpha & \beta \\
    \gamma & \delta 
  \end{pmatrix}
  \in \mathbb{R}^{2\times 2}.
\end{equation*}
Then the two coupled equations on the time scales $\Tx$ and $\Ty$
\begin{equation}\label{e:problem:2}
   \begin{cases}
      \Delta_\mu x(t) = \alpha x(t) + \beta y(\ty), & t\in\Tx, \\
      \Delta_\nu y(t) = \gamma x(\tx) + \delta y(t), & t\in\Ty,
   \end{cases}
\end{equation}
are called $(\mu,\nu)$-asynchronous difference equation (or $(\mu,\nu)$-asynchronous discrete time dynamical system) with parameters $P$. See Figure \ref{f:53} for an illustration of $\Tx$ and $\Ty$.
\begin{figure}
\begin{center}
\includegraphics[width=12cm]{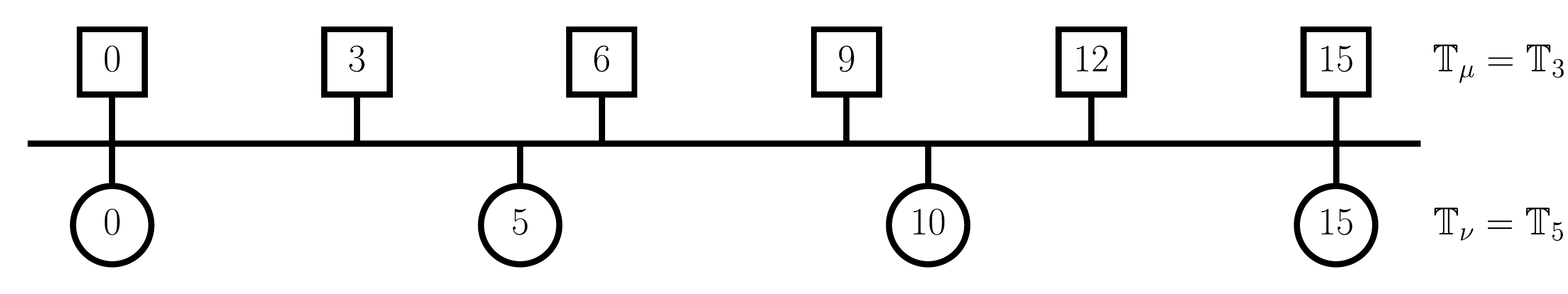}
\vspace*{-2ex}
\end{center}
\caption{Time scales $\TS_3$ and $\TS_5$ of a $(3,5)$-asynchronous discrete dynamical system \eqref{e:problem:2}.}\label{f:53}
\end{figure}

A tuple $(x,y)$ of sequences $x \colon \Tx \rightarrow \mathbb{R}$, $y \colon \Ty \rightarrow \mathbb{R}$, is called \emph{solution of} \eqref{e:problem:2}, if it satisfies \eqref{e:problem:2}.

Throughout the paper we use the notation $\sigma(t)$, $\sigma:\TS\rightarrow\TS$, to denote the successor of an element $t$ of a given time scale $\TS$. This operator is commonly known as the forward-jump operator in the theory of time scales \cite{BP}. Naturally, its value depends strongly on the considered time scale, e.g., $\sigma(0)=\rho$ for $\TS=\TS_\rho$.

\begin{example}\label{x:VAR}
In economics, dynamics of variables naturally includes data of different frequencies.  A typical problem arises from different natural time scales. Let us consider a simplistic model of fiscal spending involving gross domestic product (GDP) and government spending. GDP data are usually observed quarterly and would naturally imply a time scale $\mathbb{T}_{3}$ with $\mu=3$ months. On the other hand, the government spending is determined by a time series with yearly frequency as government budget is approved once a year. In this case the natural time scale $\mathbb{T}_{12}$ has a frequency of $\nu=12$ months.

Denoting GDP by $x$ and government spending by $y$, a synchronized simplistic fiscal model has the form:
\begin{equation}\label{eq:synchronous:fiscal}
\begin{cases}
      \Delta x(t) = \alpha x(t) + \beta y(t), & t\in\T_{12}, \\
      \Delta y(t) = \gamma x(t) + \delta y(t), & t\in\T_{12},
   \end{cases}
\end{equation}

Alternatively, we could consider the asynchronous model \eqref{e:problem:2} and allow GDP $x$ and government spending $y$ to follow their own time scales - quarterly $\mathbb{T}_{3}$ for $x$ and annual $\mathbb{T}_{12}$ for $y$.

Apparently, the asynchronous model is closer to how economic agents observe macroeconomic data. In this paper we are mainly interested in qualitative differences between the synchronous model \eqref{eq:synchronous:fiscal} and the asynchronous model \eqref{e:problem:2}. Naturally, from the point of view of macroeconomic applications there are numerous other questions, which we do not discuss here. Are the real data time series better explained by the asynchronous model \eqref{e:problem:2}? Can the asynchronous formulation \eqref{e:problem:2} provide better forecasts of the future?

\end{example}

\section{Linear system representation}\label{sec:linear:system}

It is well-known that for difference equations a delay can be eliminated by increasing the dimension of the system (see, e.g., \cite{EZ}). We associate a 4-dimensional linear system to \eqref{e:problem:2} by storing appropriate delayed values of $x(t), y(t)$ in auxiliary variables $\underline{x}(t), \underline{y}(t)$. For this to work, we need to incorporate all times on which dynamics happens in \eqref{e:problem:2}, i.e., we consider the union 
\[
  \mathbb{T} \coloneqq \Tx \cup \Ty
\]
of the two time scales of \eqref{e:problem:2}. 

\begin{theorem}[Linear system representation]\label{thm:representation}
Let $\mu,\nu>0$ and $\mathbb{T} = \Tx \cup \Ty$. Then there exists a unique $A:\TS\rightarrow \mathbb{R}^{4\times 4}$ and the corresponding four dimensional dynamic equation
\begin{equation}\label{e:linear:system}
\begin{pmatrix}
u \\
\underline{u} \\
v \\
\underline{v}
\end{pmatrix} (\sigma(t)) =
 A(t) \begin{pmatrix}
u \\
\underline{u} \\
v \\
\underline{v}
\end{pmatrix} (t), \qquad t\in\TS, \begin{pmatrix}
u \\
\underline{u} \\
v \\
\underline{v}
\end{pmatrix} \in\mathbb{R}^4,
\end{equation}
such that for arbitrary sequences $x:\Tx\rightarrow\mathbb{R}$ and $y:\Ty\rightarrow\mathbb{R}$ the following statements are equivalent:
\begin{enumerate}
  \item[(i)] $(x,y)$ is a solution of \eqref{e:problem:2},

  \item[(ii)] the solution $(u,\underline{u},v,\underline{v}) \colon \T \rightarrow \mathbb{R}^4$  of \eqref{e:linear:system} with $u(0)=x(0)$, $\underline{u}(0)=0$, $v(0)=y(0)$, $\underline{v}(0)=0$ satisfies $u|_{\Tx}= x$ and $v|_{\Ty}= y$.  
\end{enumerate}
\end{theorem}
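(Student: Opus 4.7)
The plan is to construct $A(t)$ explicitly by a case analysis on the position of $t$ within the union time scale $\T = \Tx \cup \Ty$, and then verify the equivalence of (i) and (ii) by induction on $t \in \T$.

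First I fix the intended semantics of the state variables, which the statement leaves implicit but which I view as the content of the construction. The variables $u(t)$ and $v(t)$ should be the sample-and-hold extensions $x(t^{*\mu})$ and $y(t^{*\nu})$ of $x,y$ to all of $\T$, while $\underline{u}$ and $\underline{v}$ act as one-step-ahead buffers. Whenever the $x$-equation in \eqref{e:problem:2} fires at $t^{*\mu} \in \Tx$, the value $x(t^{*\mu}+\mu) = (1+\mu\alpha)u(t^{*\mu}) + \mu\beta v(t^{*\mu})$ is written into $\underline{u}$ and held there until the instant $t^{*\mu}+\mu$ is reached in $\T$, at which point $\underline{u}$ is transferred into $u$. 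The buffer $\underline{v}$ plays the analogous role for $y$. Because $0 \in \Tx \cap \Ty$, the prescribed initial data $\underline{u}(0) = \underline{v}(0) = 0$ are overwritten in the first step with the correct pre-computed values $x(\mu)$ and $y(\nu)$.

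With this semantics, the matrix $A(t)$ is defined piecewise by splitting on whether $t \in \Tx \cap \Ty$, $t \in \Tx \setminus \Ty$, or $t \in \Ty \setminus \Tx$, together with a secondary split on whether $\sigma(t)$ coincides with $t^{*\mu}+\mu$ and/or $t^{*\nu}+\nu$. In each case the nonzero entries are forced: the firing equation writes the coefficients $(1+\mu\alpha,\mu\beta)$ (respectively $(\nu\gamma,1+\nu\delta)$) into the row of $\underline{u}$ (respectively $\underline{v}$), the load rule copies the appropriate buffer into the corresponding current variable exactly when the next $\mu$- or $\nu$-grid point is reached, and all other components propagate trivially. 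This yields an explicit piecewise-constant $A(t)$, and uniqueness follows because, once the buffer semantics is fixed, requiring the equivalence for all initial data $x(0), y(0)$ pins down every column of $A(t)$.

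Verification is a routine induction on the ordered elements of $\T$: if the semantic interpretation of $(u,\underline{u},v,\underline{v})(t)$ holds at $t$, the construction of $A(t)$ forces it to hold at $\sigma(t)$, and at elements of $\Tx$ this reproduces the $x$-recursion in \eqref{e:problem:2}, yielding $u|_{\Tx} = x$; symmetrically $v|_{\Ty} = y$. The converse implication (ii) $\Rightarrow$ (i) is immediate from existence and uniqueness of solutions of \eqref{e:linear:system}. The main obstacle will be the combinatorial bookkeeping in the case analysis: when $\sigma(t)$ straddles both a $\mu$- and a $\nu$-grid point, as can happen at $t \in \Tx \cap \Ty$, the order in which store and load operations are composed within a single application of $A(t)$ must be arranged so that both buffers hold the correct values at $\sigma(t)$. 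Handling this uniformly across all commensurability classes of $(\mu,\nu)$ requires care but introduces no new conceptual ingredient.
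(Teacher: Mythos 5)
Your overall strategy coincides with the paper's: augment the state to four dimensions on $\T=\Tx\cup\Ty$, define $A(t)$ by a case split on the quadruple $\big(\indicator_{\T_\mu}(t),\indicator_{\T_\mu}(\sigma(t)),\indicator_{\T_\nu}(t),\indicator_{\T_\nu}(\sigma(t))\big)$, and verify the equivalence by induction along the finitely many ordered elements of $\T$ up to a given time. The state-space realization you choose is, however, genuinely different. In the paper's system \eqref{(a)}--\eqref{(d)} the auxiliary variables are \emph{input} buffers: $\underline{v}$ samples $v$ at the time $t^{*\nu}$ relevant to the current $\mu$-step and holds it, so that the arithmetic of the $x$-equation can be carried out at the step \emph{entering} $t+\mu$, where $v$ itself may already have advanced past $y(t^{*\nu})$; the proof's Cases 2.1 and 2.2 are precisely the verification that $\underline{v}(r)=v(t^{*\nu})$ at the relevant predecessor $r$. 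In your scheme the arithmetic is carried out at the step \emph{leaving} $t\in\T_\mu$, where $v(t)=y(t^{*\nu})$ is automatically the correct input, and $\underline{u}$ is an \emph{output} buffer transporting the precomputed value $x(t+\mu)$ forward until the instant $t+\mu$ is reached. Both realizations work and carry comparable bookkeeping: you avoid the paper's tracking of the held input, but you must instead compose store and load within a single application of $A(t)$ whenever the predecessor of $t+\mu$ is $t$ itself, so that the load reads the freshly computed value rather than the stale buffer; you correctly flag this, and it is unproblematic because a composition of linear maps is linear.

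One caveat: your construction in fact undercuts the literal uniqueness claim in the theorem. The equivalence of (i) and (ii) constrains only $u|_{\T_\mu}$ and $v|_{\T_\nu}$, so the rows of $A(t)$ driving $\underline{u}$ and $\underline{v}$ are not determined by the required property; at a common grid point your matrix has second row $(1+\mu\alpha,\,0,\,\mu\beta,\,0)$ where the paper's $A_{1111}$ in Table \ref{t:9cases} has $(1,0,0,0)$, and both systems satisfy the equivalence. Your argument that the columns are pinned down ``once the buffer semantics is fixed'' is therefore circular: it gives uniqueness only relative to a chosen semantics. Since the paper's own proof is silent on uniqueness, this does not put you behind the paper, but you should not present uniqueness as established.
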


\begin{proof}
Consider the following difference equation on $\TS$:
\begin{numcases}{\label{(a)}\hspace*{-1.6cm}u(\sigma(t))=}
   u(t) \quad \text{if } \sigma(t) \notin \Tx, & \label{(a1)}
   \\[1ex]
   (1 + \alpha \mu)u(t) + \beta \mu v(t) \quad \text{if } \sigma(t) \in \Tx \wedge t \in \Tx\cap\Ty, & \label{(a2)}
   \\[1ex]
   (1 + \alpha \mu)u(t) + \beta \mu \underline{v}(t) \quad \text{if } \sigma(t) \in \Tx \wedge t \notin \Tx\cap\Ty,& \label{(a3)}
\end{numcases}
\addtocounter{equation}{-1}
\begin{numcases}{\label{(b)}\underline{u}(\sigma(t))=}
   \setcounter{equation}{4}
   u(t) \quad \text{if } t\in\Tx\cap\Ty \vee (t\notin \Ty \wedge \sigma(t)\in \Ty),\phantom{aaaaaaaaaaaaaa} & \label{(b1)}
   \\[1ex]
   \underline{u}(t) \quad \text{otherwise,} & \label{(b2)}
\end{numcases}
\addtocounter{equation}{-1}
\begin{numcases}{\label{(c)}\hspace*{-1.6cm}v(\sigma(t))=}
   \setcounter{equation}{6}
   v(t) \quad \text{if } \sigma(t) \notin \Ty, & \label{(c1)}
   \\[1ex]
   (1 + \delta \nu)v(t) + \gamma \nu u(t)\quad \text{if } \sigma(t) \in \Ty \wedge t \in \Tx\cap\Ty, & \label{(c2)}

   \\[1ex]
   (1 + \delta \nu)v(t) + \gamma \nu \underline{u}(t) \quad \text{if } \sigma(t) \in \Ty \wedge t \notin \Tx \label{(c3)}
\end{numcases}
\addtocounter{equation}{-1}
\begin{numcases}{\label{(d)}\underline{v}(\sigma(t))=}
   \setcounter{equation}{9}
   v(t)  \quad \text{if } t\in\Tx\cap\Ty \vee (t\notin \Tx \wedge \sigma(t)\in \Tx),\phantom{aaaaaaaaaaaaaa} & \label{(d1)}
    \\[1ex]
   \underline{v}(t) \quad \text{otherwise.} \label{(d2)}
\end{numcases}
It is of the form \eqref{e:linear:system}. We first prove the following statement:
\begin{equation}\label{e:statement}
   (u,\underline{u},v,\underline{v}) \text{ solves } 
   \eqref{(a)}   
   \quad \Rightarrow \quad 
   (u|_{\Tx}, v|_{\Ty}) \text{ solves }  \eqref{e:problem:2}.
\end{equation}
To this end, let $(u,\underline{u},v,\underline{v}) \colon \T \rightarrow \mathbb{R}^4$  be a solution of \eqref{(a)}. In order to prove \eqref{e:statement}, we show that 
\begin{alignat}{2} \label{e:proof:4:u}
   u(t+\mu) &= (1+\alpha \mu) u(t) + \beta \mu v(\ty) &&\qquad (t \in \Tx),
\\ \label{e:proof:4:v}
   v(t+\nu) &= \gamma \nu u(\tx) + (1+\delta \nu) v(t)  &&\qquad  (t \in \Ty).
\end{alignat}
We only show \eqref{e:proof:4:u}, the latter relation \eqref{e:proof:4:v} is proved analogously. To show \eqref{e:proof:4:u}, assume that $t \in \T_\mu$, then $t + \mu \in \T_\mu$, too. Define
\begin{equation*}
   r \coloneqq \max \{s \in \T \colon s < t + \mu\}.
\end{equation*}
Obviously, $t \leq r < t + \mu$. We distinguish between the following cases.

Case 1: If $r \in \T_\mu \cap \T_\nu$, then clearly $r = t = t^{*\nu}$ and \eqref{e:proof:4:u} follows from (\ref{(a)}b).

Case 2: If $r \not\in \T_\mu \cap \T_\nu$, then we can either have  $r \in \T_\mu \setminus \T_\nu$ or  $r \in \T_\nu \setminus \T_\mu$:

Case 2.1: If $r \in \T_\mu \setminus \T_\nu$, then $r=t$ and (\ref{(d)}i)-(\ref{(d)}j) imply that
\begin{equation}\label{e:case2.1}
   \underline{v}(r) = \underline{v}(t) = v(t^{*\nu}).
\end{equation}

Case 2.2: If $r \in \T_\nu \setminus \T_\mu$, then (\ref{(d)}j) yields
\begin{equation*}
   \underline{v}(\sigma(t)) = \underline{v}(s) = \underline{v}(r),
\end{equation*}
for all $s\in\T_\nu$ with $\sigma(t)<s<r$. Consequently, (\ref{(d)}i) implies that
\begin{equation}\label{e:case2.2}
   \underline{v}(r) = \underline{v}(\sigma(t)) = v(t^{*\nu}).
\end{equation}
By assumptions of Case 2 and (\ref{(a)}a), $u(r) = u(t)$. Relations \eqref{e:case2.1} and \eqref{e:case2.2} yield that $\underline{v}(r) = v(t^{*\nu})$. Hence (\ref{(a)}c) implies that
\begin{align*}
   u(t + \mu)
   &=
   (1 + \alpha \mu) u(r) + \beta \mu \underline{v}(r)
\\
   &=
   (1 + \alpha \mu) u(t) + \beta \mu v(t^{*\nu}),
\end{align*}
i.e., \eqref{e:proof:4:u} holds in this case as well.

$(i)\Rightarrow (ii)$.
Let $(x,y)$ be a solution of \eqref{e:problem:2} and let $(u,\underline{u},v,\underline{v}) \colon \T \rightarrow \mathbb{R}^4$ be the solution of \eqref{(a)} with $u(0)=x(0)$, $\underline{u}(0)=0$, $v(0)=y(0)$, $\underline{v}(0)=0$. Then by \eqref{e:statement}, $(u|_{\Tx}, v|_{\Ty})$ solves \eqref{e:problem:2}. Since $u(0)=x(0)$, $v(0)=y(0)$, it follows that $u|_{\Tx}= x$ and $v|_{\Ty}= y$.

$(ii)\Rightarrow (i)$. Let $x:\Tx\rightarrow\mathbb{R}$ and $y:\Ty\rightarrow\mathbb{R}$ be given and let $(u,\underline{u},v,\underline{v}) \colon \T \rightarrow \mathbb{R}^4$ be the solution of \eqref{(a)} with $u(0)=x(0)$, $\underline{u}(0)=0$, $v(0)=y(0)$, $\underline{v}(0)=0$. Then by (ii), $u|_{\Tx}= x$ and $v|_{\Ty}= y$, and \eqref{e:statement} implies that $(x,y)$ solves \eqref{e:problem:2}.
\end{proof}

Using the indicator function $\indicator_M \colon \R \to \{0,1\}$ for a set $M \subseteq \R$,
\begin{equation*}
   \indicator_{M}(t)
   \coloneqq
   \begin{cases}
      1 & \text{if } t \in M,
   \\
      0 & \text{if } t \not\in M,
   \end{cases}
\end{equation*}
we get the following explicit representation of the coefficient matrix $A(t)$ of the linear system representation \eqref{e:linear:system}.

\begin{corollary}[Explicit form of linear system representation]\label{c:matrices}
Under the assumptions of Theorem \ref{thm:representation} the coefficients $A \colon \T \rightarrow \R^{4 \times 4}$ are given by 
\begin{equation*}
   A(t) 
   \coloneqq
   A_{i j k \ell}
\end{equation*}
with $(i,j,k,\ell) = \big(\indicator_{\T_{\mu}}(t), \indicator_{\T_{\mu}}(\sigma(t)), \indicator_{\T_{\nu}}(t), \indicator_{\T_{\nu}}(\sigma(t))\big)$ and $A_{i j k \ell}$ as defined by one of the $9$ cases listed in Table \ref{t:9cases}.
\end{corollary}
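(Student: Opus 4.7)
The plan is to read off the matrix $A(t)$ directly from the four update rules \eqref{(a)}--\eqref{(d)} constructed in the proof of Theorem \ref{thm:representation}. Each of those rules is a branching piecewise-linear prescription for one of $u$, $\underline{u}$, $v$, $\underline{v}$ at $\sigma(t)$, every branch is a linear combination of $u(t)$, $\underline{u}(t)$, $v(t)$, $\underline{v}(t)$, and the branch selected depends solely on the membership of $t$ and $\sigma(t)$ in $\T_\mu$ and $\T_\nu$. Assembling the four component updates into a single $4\times 4$ matrix reproduces \eqref{e:linear:system}; the uniqueness of such an $A(t)$ is already part of Theorem \ref{thm:representation}, so what remains is to verify that $A(t)$ depends on $t$ only through the quadruple $(i,j,k,\ell)$ and to enumerate the resulting cases.

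Next I would observe that, since $t,\sigma(t)\in\T=\T_\mu\cup\T_\nu$, any admissible quadruple must satisfy $i+k\geq 1$ and $j+\ell\geq 1$. This leaves exactly $3\times 3=9$ admissible combinations, matching the count in the statement. For each such quadruple I would translate the set-theoretic predicates appearing in \eqref{(a)}--\eqref{(d)} into conditions on $(i,j,k,\ell)$: for example, \textit{$t\in\T_\mu\cap\T_\nu$} becomes $i=k=1$; \textit{$\sigma(t)\in\T_\mu\wedge t\notin\T_\mu\cap\T_\nu$} becomes $j=1$ together with $ik=0$; the disjunction in \eqref{(b1)} becomes $ik=1$ or $(k=0\wedge\ell=1)$; and analogously for \eqref{(d1)}. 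Doing this once for each of the four components uniquely determines which branch fires for a given $(i,j,k,\ell)$.

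The remaining work is purely mechanical bookkeeping, and this is where the only real obstacle --- if one can call it that --- lies. Having fixed $(i,j,k,\ell)$, one substitutes into \eqref{(a)}--\eqref{(d)} to read off the four rows of $A_{ijk\ell}$: for instance $(1,1,1,1)$ selects \eqref{(a2)}, \eqref{(b1)}, \eqref{(c2)}, \eqref{(d1)} and produces the fully synchronous row structure $u\mapsto (1+\alpha\mu)u+\beta\mu v$, $\underline{u}\mapsto u$, $v\mapsto\gamma\nu u+(1+\delta\nu)v$, $\underline{v}\mapsto v$; cases with $j=0$ freeze $u$ via \eqref{(a1)} and cases with $\ell=0$ freeze $v$ via \eqref{(c1)}, reflecting the sample-and-hold mechanism on the component whose update time is not reached at $\sigma(t)$. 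Running this substitution through the remaining eight admissible quadruples produces Table~\ref{t:9cases} and concludes the proof.
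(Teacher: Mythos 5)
Your proposal is correct and follows essentially the same route as the paper: enumerate the nine admissible quadruples $(i,j,k,\ell)$ (three disjoint possibilities each for $t$ and for $\sigma(t)$ in $\T=\T_\mu\cup\T_\nu$) and, for each, read the rows of $A_{ijk\ell}$ off the branching rules \eqref{(a)}--\eqref{(d)} from the proof of Theorem \ref{thm:representation}. Your explicit translation of the set-theoretic predicates into conditions on $(i,j,k,\ell)$ is a slightly more detailed bookkeeping than the paper gives, but the argument is the same.
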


\begin{proof}
For each $t \in \T = \T_\mu \cup \T_\nu$ there are $3$ possible disjoint cases, either
\begin{equation*}
   t \in \T_\mu \cap \T_\nu
   \qquad \text{or} \qquad
   t \in \T_\mu \setminus \T_\nu
   \qquad \text{or} \qquad
   t \in \T_\nu \setminus \T_\mu.
\end{equation*}
Similarly, there are $3$ cases for $\sigma(t) \in \T$. Consequently, there are $9$ possible combinations for the values of the indicator functions in the quadruple
\begin{equation*}
   (i,j,k,\ell)
   \coloneqq
   \big(\indicator_{\T_{\mu}}(t), 
   \indicator_{\T_{\mu}}(\sigma(t)), 
   \indicator_{\T_{\nu}}(t), 
   \indicator_{\T_{\nu}}(\sigma(t))\big)
\end{equation*}
which are listed and illustrated in Table \ref{t:9cases}. For each of those cases, we can use the linear system representation \eqref{(a)} to compute $A_{i j k \ell}$ as listed in Table \ref{t:9cases}.
\end{proof}

Consequently, we are able to introduce a solution operator of \eqref{e:problem:2}.

\begin{corollary}[Solution operator for asynchronous discrete time dynamical system]\label{c:representation:matrix}
Let $\mathbb{T}=\mathbb{T}_\mu \cup \mathbb{T}_\nu$, $(\mathbb{T})^2_{\geq} \coloneqq \{(t,t_0) \in \mathbb{T} \times \mathbb{T} \colon t \geq t_0\}$ and let $\Phi \colon (\mathbb{T})^2_{\geq} \rightarrow \mathbb{R}^{4 \times 4}$, $(t,t_0) \mapsto \Phi(t,t_0)$, denote the evolution operator (2-parameter process) of \eqref{(a)}, i.e., for $t_0 \in \T$
\[
  (t,t_0) \mapsto \Phi(t,t_0) (x_0, \underline{x}_0, y_0, \underline{y}_0)^{\top}
\]
is the solution of the initial value problem \eqref{(a)}, $(x(t_0), \underline{x}(t_0), y(t_0), \underline{y}(t_0)) = (x_0, \underline{x}_0, y_0, \underline{y}_0)$. Define the \emph{solution operator for \eqref{e:problem:2}} as $\Psi \colon (\mathbb{T})^2_{\geq} \rightarrow \mathbb{R}^{2 \times 2}$, $(t,t_0) \mapsto \Psi(t,t_0)$,
\begin{equation*}
  \Psi(t,t_0)
  \coloneqq
  \begin{pmatrix}
     \Phi_{11}(t,t_0) & \Phi_{13}(t,t_0)
  \\
     \Phi_{31}(t,t_0) & \Phi_{33}(t,t_0)
  \end{pmatrix}.
\end{equation*}
Then for $(x_0,y_0) \in \R^2$ and $x, y \colon \T \rightarrow \R$ with
\begin{equation*}
   \begin{pmatrix}
     x(t)
   \\
     y(t)
  \end{pmatrix}
  \coloneqq
   \Psi(t,0)
   \begin{pmatrix}
     x_0
   \\
     y_0
   \end{pmatrix},
\end{equation*}
the tuple $(x_{| \T_\mu}, y_{| \T_\nu})$ of restrictions is a solution of \eqref{e:problem:2}.
\end{corollary}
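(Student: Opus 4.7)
The plan is to recognize this as an immediate repackaging of Theorem \ref{thm:representation}: I will show that $\Psi(t,0)(x_0,y_0)^\top$ coincides with the projection onto the first and third components of $\Phi(t,0)$ applied to the zero-padded vector $(x_0, 0, y_0, 0)^\top$, and then invoke the equivalence (ii)$\Rightarrow$(i).

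First I would fix $(x_0,y_0) \in \R^2$ and consider the four-dimensional initial value problem \eqref{e:linear:system} with initial datum $(u(0), \underline{u}(0), v(0), \underline{v}(0)) = (x_0, 0, y_0, 0)$. By definition of the evolution operator, its solution is
\begin{equation*}
   \bigl(u(t), \underline{u}(t), v(t), \underline{v}(t)\bigr)^\top
   = \Phi(t,0)(x_0, 0, y_0, 0)^\top \qquad (t \in \T).
\end{equation*}
Expanding the matrix--vector product and using that the second and fourth entries of the initial datum vanish, I obtain
\begin{align*}
  u(t) &= \Phi_{11}(t,0)\, x_0 + \Phi_{13}(t,0)\, y_0,
  \\
  v(t) &= \Phi_{31}(t,0)\, x_0 + \Phi_{33}(t,0)\, y_0.
\end{align*}
By the definition of $\Psi$, the right-hand sides are precisely the two components of $\Psi(t,0)(x_0, y_0)^\top$, so that $u(t) = x(t)$ and $v(t) = y(t)$ hold for every $t \in \T$.

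Next I would apply Theorem \ref{thm:representation} in the direction (ii)$\Rightarrow$(i) to the candidate sequences $\tilde{x} \coloneqq u|_{\Tx}$ and $\tilde{y} \coloneqq v|_{\Ty}$. The four-dimensional solution $(u, \underline{u}, v, \underline{v})$ constructed above is exactly the one whose initial condition reads $u(0) = \tilde{x}(0) = x_0$, $\underline{u}(0) = 0$, $v(0) = \tilde{y}(0) = y_0$, $\underline{v}(0) = 0$, and by construction it satisfies $u|_{\Tx} = \tilde{x}$ and $v|_{\Ty} = \tilde{y}$. Hence condition (ii) of the theorem holds, and therefore condition (i) does as well: $(\tilde{x}, \tilde{y})$ is a solution of \eqref{e:problem:2}. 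Combining this with the identifications $u \equiv x$ and $v \equiv y$ on $\T$ established in the previous step yields $(x|_{\Tx}, y|_{\Ty}) = (\tilde{x}, \tilde{y})$, which is the claim.

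There is no genuine obstacle in this argument, since all the substantive work is already contained in Theorem \ref{thm:representation}; the corollary is essentially a bookkeeping statement, isolating from the $4\times 4$ evolution $\Phi$ the four entries $\Phi_{11}, \Phi_{13}, \Phi_{31}, \Phi_{33}$ that alone determine the action on physical initial data. The only point worth emphasizing is that the augmented initial condition prescribed in part (ii) of the theorem, $\underline{u}(0) = \underline{v}(0) = 0$, is exactly the zero-padding that kills the contributions of the columns indexed by $2$ and $4$ and thereby justifies the $2\times 2$ reduction defining $\Psi$.
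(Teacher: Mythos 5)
Your proposal is correct and follows essentially the same route as the paper: both arguments apply $\Phi(t,0)$ to the zero-padded vector $(x_0,0,y_0,0)^\top$, observe that the vanishing second and fourth entries reduce the action to the $2\times 2$ submatrix $\Psi(t,0)$, and then invoke Theorem \ref{thm:representation} to conclude that $(u|_{\Tx}, v|_{\Ty})$ solves \eqref{e:problem:2}. The paper additionally spells out $\Phi(t,0)$ as the ordered product $A(t_{k-1})\cdots A(t_0)$ of the one-step matrices, but this is expository and not needed for the reduction itself.
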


\begin{proof}
Let $t\in\T$. The set $\T \cap [0,t]$ contains finitely many elements $0 = t_0 < t_1 < \dots < t_k = t$. Theorem~\ref{thm:representation} implies that with the matrices $A(t_i)$ from Corollary \ref{c:representation:matrix} and Table \ref{t:9cases},
\begin{align*}
	(x(t),\underline{x}(t),y(t),\underline{y}(t))^\top 
	&\coloneqq \Phi(t,0) (x_0,0,y_0,0)^\top \\ 
	&= \Phi(t_k,t_{k-1}) \cdots \Phi(t_2,t_1) \Phi(t_1,t_0) (x_0,0,y_0,0)^\top \\
	 &= A(t_{k-1}) \cdots A(t_1)A(t_0) (x_0,0,y_0,0)^\top
\end{align*}
is a solution of \eqref{(a)}. Writing $\Phi = (\Phi_{i j})_{i,j=1, \dots, 4}$, we get for $t \in \T$
\[
\Phi(t,0) =
	\begin{pmatrix}
	\Phi_{11} & \Phi_{12} & \Phi_{13} & \Phi_{14} \\
	\Phi_{21} & \Phi_{22} & \Phi_{23} & \Phi_{24} \\
	\Phi_{31} & \Phi_{32} & \Phi_{33} & \Phi_{34} \\
	\Phi_{41} & \Phi_{42} & \Phi_{43} & \Phi_{44} \\
	\end{pmatrix}(t,0)
\]
and hence
\begin{align*}
\begin{pmatrix}
x(t)\\
y(t)
\end{pmatrix} 
&=
\begin{pmatrix}
\pi_1 \circ \Phi(t,0)\cdot (x_0,0,y_0,0)^\top \\
\pi_3 \circ \Phi(t,0)\cdot (x_0,0,y_0,0)^\top
\end{pmatrix}
\\
&=
\begin{pmatrix}
\Phi_{11} & \Phi_{13} \\
\Phi_{31} & \Phi_{33}
\end{pmatrix} (t,0)\cdot (x_0,y_0)^\top \\
&= \Psi(t,0) \cdot (x_0,y_0)^\top.
\end{align*}
By Theorem \ref{thm:representation}, the tuple $(x_{| \T_\mu}, y_{| \T_\nu})$ solves \eqref{e:problem:2}.
\end{proof}

\begin{remark}
Note that $\Psi(t,t_0)$ is not a $2$-parameter process on $\T$, since for arbitrary $t, \tau, s \in \T$,
\begin{equation}\label{e:semigroup}
   \Psi(t, \tau) \Psi(\tau, s) = \Psi(t, s)
\end{equation}
does not necessarily hold. However, \eqref{e:semigroup} holds for all $t, \tau, s \in \T_\mu \cap \T_\nu$. Moreover, if $\mu$ and $\nu$ are commensurable, i.e., $\T_\mu \cap \T_\nu = \T_T$ with $T = \operatorname{lcm}(\mu, \nu)$, then for $k \in \Z$
\begin{equation*}
   \Psi(T, 0) = \Psi((k + 1) T, k T).
\end{equation*}
Consequently, $ \Psi(kT,0) = \prod_{i=1}^k \Psi(T,0)$ for $k \in \N$ in this case.
\end{remark}

We conclude this section with a simple illustration of Corollary \ref{c:representation:matrix}.

\begin{example}\label{x:12}
Let us consider asynchronous dynamics with $\mu=1$ and $\nu=2$, i.e.,
\begin{align*}
\Tx &= \{0,1,2, \ldots \}, \\
\Ty &= \{0,2,4, \ldots \}, \\
\T &= \Tx\cup\Ty = \{0,1,2, \ldots \}.
\end{align*}
Since $0,2\in\Tx\cap\Ty$ and $1\in\Tx\setminus\Ty$ we get that $\Phi(2,0)=A_{1101}\cdot A_{1110}$ where (see Table \ref{t:9cases}):
\[
A_{1110} = \begin{pmatrix}
 1+\alpha & 0 & \beta  & 0 \\
 1 & 0 & 0 & 0 \\
 0 & 0 & 1 & 0 \\
 0 & 0 & 1 & 0 \\
\end{pmatrix},
\quad
A_{1101} = \begin{pmatrix}
 1+\alpha  & 0 & 0 & \beta  \\
 1 & 0 & 0 & 0 \\
 0 & 2 \gamma  & 1+2 \delta & 0 \\
 0 & 0 & 0 & 1 \\
\end{pmatrix}.
\quad
\]
Consequently,
\[
\Phi(2,0)=
\begin{pmatrix}
 (1+\alpha)^2 & 0 & (1+\alpha) \beta +\beta  & 0 \\
 1+\alpha & 0 & \beta  & 0 \\
 2 \gamma  & 0 & 1+2 \delta & 0 \\
 0 & 0 & 1 & 0
\end{pmatrix},
\quad
\Psi(2,0)=\begin{pmatrix}
 (1+\alpha)^2 & (1+\alpha) \beta +\beta   \\
 2 \gamma  &  1+2 \delta 
\end{pmatrix}.
\]
\end{example}

\section{Interpolated dynamics}\label{sec:timeone}
Recall that for $p \in \N$ a nonsingular matrix $M \in \R^{n \times n}$ has a complex $p$-th root $\sqrt[p]{M} \in \C^{n \times n}$ \cite[Section 7.1, pp.\ 173-174]{Hi}. If $\mu,\nu>0$ are commensurable, i.e., $\T_\mu \cap \T_\nu = \T_T$ with $T > 0$, there exist $k, \ell \in \N$ with $k \mu = \ell \nu = T$. We define the time scale $\T_\tau$ with
\begin{equation*}
   \tau 
   \coloneqq 
   \tfrac{T}{k \ell}
   =
   \tfrac{\mu}{\ell}
   =
   \tfrac{\nu}{k}.
\end{equation*}
$\T_\tau$ refines or interpolates $\T_\mu$ as well as $\T_\nu$, since $\T_\mu \subseteq \T_\tau$ and $\T_\nu \subseteq \T_\tau$. We can construct an associated complex difference equation 
\begin{equation}\label{e:problem:uv}
   \begin{pmatrix}
      u(t+\tau) \\ v(t+\tau)
   \end{pmatrix}
   =
   B
   \begin{pmatrix}
      u(t) \\ v(t)
   \end{pmatrix}
   \qquad
   (t \in \T_\tau),
\end{equation}
which interpolates \eqref{e:problem:2}, by defining
\[
   B 
   \coloneqq \sqrt[k \ell]{\Psi(T,0)} \in \mathbb{C}^{2 \times 2}, 
\]
and
\begin{equation}\label{e:timeone}
   \Psi(t,s) 
   \coloneqq 
   B^{\frac{t-s}{\tau}}
   \qquad
   \text{for } t, s, \in \T_\tau, t \geq s.
\end{equation}
By \eqref{e:timeone}, $B = \Psi(\tau,0)$ and $\Psi(T,0) = B^{k \ell}$.

For every solution $(x,y)$ of \eqref{e:problem:2} and solution $(u,v)$ of \eqref{e:problem:uv} with $x(0) = u(0)$, $y(0) = v(0)$, we have
\[
   \begin{pmatrix}
      x(t) \\ y(t)
   \end{pmatrix}
   =
   \begin{pmatrix}
      u(t) \\ v(t)
   \end{pmatrix}
   \qquad \text{for all } t \in \TT. 
\]
Note that in general $x(t) = u(t)$ does not necessarily hold for $t \in \T_\mu \setminus \TT$, also $y(t) = v(t)$ is not necessarily true for $t \in \T_\nu \setminus \TT$.

We illustrate these notions by going back to Example \ref{x:12}.
\begin{example}\label{x:12:time1map}
Let us consider the asynchronous discrete time dynamical system \eqref{e:problem:2} with $\mu=1$, $\nu=2$ and parameters $\alpha=2$, $\beta=1$, $\gamma=-1$ and $\delta=1$. Following Example \ref{x:12}, we observe that
\[
\Psi(2,0)= \begin{pmatrix}
 9 & 4   \\
 -2  &  3
 \end{pmatrix}.
\]
Since $T=\operatorname{lcm}(1, 2)=2$, we compute the matrix square root of $\Psi(2,0)$ and determine that
\[
   B \coloneqq \Psi(1,0) \coloneqq \sqrt{\Psi(2,0)} = 
\begin{pmatrix}
 -\sqrt{5}+2 \sqrt{7} & -2 \sqrt{5}+2 \sqrt{7} \\
 \sqrt{5}-\sqrt{7} & 2 \sqrt{5}-\sqrt{7}
 \end{pmatrix}  
 \approx  
 \begin{pmatrix}
 3.05543 & 0.819367 \\
 -0.409683 & 1.82638
 \end{pmatrix}.
\]
\end{example}

\section{Synchronous case $\mu=\nu$}\label{sec:synchronous}
Let us consider the standard synchronous setting $\mu = \nu$ as a benchmark case first. We have $\TS=\Tx=\Ty$ and the system \eqref{e:problem:2} can be rewritten as
\begin{equation}\label{e:problem:2:equal}
\begin{cases}
\Delta_\mu x (t) = \alpha x(t) + \beta y(t),\\
\Delta_\mu y (t) = \gamma x(t) + \delta y(t),
\end{cases}
\qquad (t\in\mathbb{T}).
\end{equation}
In order to be able to compare various asynchronous dynamics later, we use special notation for the solution operator $\Psi$ which indicates the periodicities via upper indices
\[
\Psi^{\mu,\mu}(\mu,0) :=\left(\begin{array}{cc}
1+\mu\alpha & \mu\beta \\
\mu\gamma & 1+\mu\delta 
\end{array}
 \right).
\]
We can rewrite system \eqref{e:problem:2:equal} as
\[
\binom{x(t+\mu)}{y(t+\mu)}  =  \left(\begin{array}{cc}
1+\mu\alpha & \mu\beta \\
\mu\gamma & 1+\mu\delta 
\end{array}
 \right)\binom{x(t)}{y(t)} = \Psi^{\mu,\mu}(\mu,0) \binom{x(t)}{y(t)}.
\]
Denoting by $\lambda(M)$ the set of all eigenvalues of a matrix $M$, we can claim the following result, an alternative of a well-known result from the theory of discrete-dynamical systems, see \cite{KP}.
\begin{theorem}\label{thm:synchronous}
Let $\lambda(\Psi^{\mu,\mu}(\mu,0))\subset B(0,1)$ or, equivalently, $\lambda(P)\subset B\big(-\frac{1}{\mu},\frac{1}{\mu} \big)$. Then $o \coloneqq (0,0) \in \mathbb{R}^2$ is a globally asymptotically stable solution of \eqref{e:problem:2:equal}, i.e., every solution $(x,y)$ of \eqref{e:problem:2:equal} satisfies
\[
\lim\limits_{t\rightarrow\infty} x(t) = 0 = \lim\limits_{t\rightarrow\infty} y(t).
\]
\end{theorem}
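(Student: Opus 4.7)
The plan is to observe that in the synchronous case \eqref{e:problem:2:equal} is a linear autonomous discrete dynamical system with one-step evolution matrix $M \coloneqq \Psi^{\mu,\mu}(\mu,0) = I + \mu P$, so that every solution satisfies
\[
  \binom{x(k\mu)}{y(k\mu)} = M^k \binom{x(0)}{y(0)}
  \qquad (k \in \N).
\]
Hence global asymptotic stability of the origin is equivalent to $M^k \to 0$ as $k \to \infty$, which will be the goal.

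Next I would record the equivalence of the two spectral hypotheses by the spectral mapping theorem: since $M = I + \mu P$, the eigenvalues of $M$ are exactly $1 + \mu \lambda$ with $\lambda \in \lambda(P)$. Then $|1 + \mu\lambda| < 1$ is the same as $|\lambda - (-1/\mu)| < 1/\mu$, so $\lambda(M) \subset B(0,1)$ iff $\lambda(P) \subset B(-1/\mu, 1/\mu)$. This justifies the ``equivalently'' clause in the statement.

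For the convergence $M^k \to 0$ I would invoke the standard fact that $\rho(M) < 1$, where $\rho$ denotes the spectral radius, implies $M^k \to 0$. A direct route is Gelfand's formula combined with the existence, for every $\varepsilon > 0$, of a submultiplicative matrix norm $\|\cdot\|_\varepsilon$ with $\|M\|_\varepsilon \leq \rho(M) + \varepsilon$; choosing $\varepsilon$ so that $\rho(M) + \varepsilon < 1$ yields $\|M^k\|_\varepsilon \leq (\rho(M) + \varepsilon)^k \to 0$. Equivalence of norms in $\R^{2 \times 2}$ then gives $M^k \to 0$ in the operator norm, so $(x(k\mu), y(k\mu)) \to (0,0)$.

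Since solutions of \eqref{e:problem:2:equal} live on $\T_\mu$, the limit $t \to \infty$ is by definition $k \to \infty$ along $t = k\mu$, so the previous step already gives the claim. I do not expect any real obstacle: the whole content is the classical linear stability theorem applied to $M = I + \mu P$, and the only care needed is keeping the two equivalent eigenvalue conditions straight. No Jordan decomposition is strictly required, but one could alternatively expand $M$ in Jordan form to make the decay of $M^k$ completely explicit.
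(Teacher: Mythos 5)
Your argument is correct: the identification $\Psi^{\mu,\mu}(\mu,0)=I+\mu P$, the spectral mapping giving the equivalence $\lambda(I+\mu P)\subset B(0,1)\Leftrightarrow\lambda(P)\subset B(-\tfrac{1}{\mu},\tfrac{1}{\mu})$, and the standard fact $\rho(M)<1\Rightarrow M^k\to 0$ together yield the claim. The paper itself states this theorem without proof, citing it as a well-known result from \cite{KP}; your writeup is precisely the classical argument that citation stands in for (and the same $\|M^n v\|\le K\kappa^n\|v\|$ mechanism the paper later uses in proving Theorem \ref{thm:mu:1}), so there is nothing to correct.
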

\section{Special case $\mu\in\mathbb{N}$ and $\nu=1$}\label{s:subset}\label{sec:multiple}
Next, we consider the case when either $\mu=k\nu$ or $\nu=k\mu$ for some $k,\mu,\nu\in\mathbb{N}$, i.e., either $\T_\mu\subset \T_\nu$ or $\T_\nu\subset \T_\mu$. Without loss of generality we only focus on the case $\mu\in\mathbb{N}$ and $\nu=1$, i.e.,
\[
\mathbb{T}_\mu = \left\lbrace 0, \mu, 2\mu, \ldots \right \rbrace = \mu \mathbb{T}_\nu ,
\]
and problem \eqref{e:problem:2} could be rewritten as (similarly as in Examples \ref{x:12} or \ref{x:dynamical:equivalence})
\begin{equation}\label{e:problem:2:mu:1}
\binom{x(t+\mu)}{y(t+\mu)}  =  \left(\begin{array}{cc}
1+\mu\alpha & \mu\beta \\
\sum_{i=0}^{\mu-1} \gamma (1+\delta)^i & (1+\delta)^\mu
\end{array}
 \right)\binom{x(t)}{y(t)} =  \Psi^{\mu,1}(\mu,0) \binom{x(t)}{y(t)},
\end{equation}

with the solution operator
\[
 \Psi^{\mu,1}(\mu,0) \coloneqq \left(\begin{array}{cc}
1+\mu\alpha & \mu\beta \\
\sum_{i=0}^{\mu-1} \gamma (1+\delta)^i & (1+\delta)^\mu
\end{array}
 \right).
\]

We have the following result regarding the stability of the origin in this case:

\begin{theorem}\label{thm:mu:1} 
Let $\lambda( \Psi^{\mu,1}(\mu,0))\subseteq B(0,1)$. Then $o$ is a globally asymptotically stable solution of the $(\mu,1)$-discrete dynamical system \eqref{e:problem:2}, i.e., every solution $(x,y)$ of \eqref{e:problem:2:mu:1} satisfies
\[
\lim\limits_{t\rightarrow\infty} x(t) = 0 = \lim\limits_{t\rightarrow\infty} y(t).
\]
\end{theorem}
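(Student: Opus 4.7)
The plan is to exploit the fact that equation \eqref{e:problem:2:mu:1} is precisely a constant-coefficient linear recurrence on $\T_\mu$ with iteration matrix $M \coloneqq \Psi^{\mu,1}(\mu,0)$. Writing $t = k\mu$ and iterating,
\begin{equation*}
   \binom{x(k\mu)}{y(k\mu)}
   =
   M^k
   \binom{x(0)}{y(0)}
   \qquad (k \in \N_0).
\end{equation*}
Since by assumption the spectral radius of $M$ is strictly less than $1$, standard linear algebra (or the classical stability result referenced via Theorem~\ref{thm:synchronous}) yields $M^k \to 0$ as $k \to \infty$, so $(x(k\mu), y(k\mu)) \to (0,0)$.

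For the $x$-component this already finishes the argument, because $x$ is defined on $\T_\mu$ and the values $x(k\mu)$ form the entire sequence. The work left is therefore to show that $y(t) \to 0$ for $t \in \T_\nu = \N_0$, not only along the subsequence $t \in \T_\mu$. To this end I would fix $t \in \T_\nu$, write $t = k\mu + j$ with $0 \le j < \mu$ (so $\tx = k\mu$), and iterate the second line of \eqref{e:problem:2} step by step on the unit time scale. Since $x(\tx) = x(k\mu)$ is constant as $j$ runs from $0$ to $\mu - 1$, a straightforward induction gives the closed form
\begin{equation*}
   y(k\mu + j)
   =
   (1+\delta)^j \, y(k\mu)
   +
   \gamma \, x(k\mu) \sum_{i=0}^{j-1}(1+\delta)^i
   \qquad (0 \le j < \mu).
\end{equation*}

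From this formula the conclusion is immediate: as $k \to \infty$ both $y(k\mu)$ and $x(k\mu)$ tend to $0$, while the coefficients $(1+\delta)^j$ and $\sum_{i=0}^{j-1}(1+\delta)^i$ range over a finite set as $j \in \{0,1,\ldots,\mu-1\}$ and are therefore uniformly bounded in $j$. Hence $y(k\mu + j) \to 0$ uniformly in $j$, which is the same as $\lim_{t \to \infty} y(t) = 0$.

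I do not expect a serious obstacle here: once the reduction to the $\T_\mu$-subsequence is made, the only non-cosmetic step is the interpolation of $y$ between consecutive points of $\T_\mu$, and the bounded gap $j < \mu$ together with the explicit geometric sum controls it completely. The mild subtlety worth stating cleanly is simply that global asymptotic stability is claimed for $y$ on its own time scale $\T_\nu$, whereas the hypothesis only places the spectral condition on the $\mu$-step operator, so one must justify the passage from the subsampled dynamics back to the full unit-step dynamics.
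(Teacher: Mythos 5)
Your proposal is correct and follows essentially the same route as the paper: first establish convergence along the subsampled lattice $\T_\mu$ via the spectral radius condition on $\Psi^{\mu,1}(\mu,0)$, then interpolate $y$ between consecutive multiples of $\mu$ using the explicit formula $y(k\mu+j)=(1+\delta)^j y(k\mu)+\gamma x(k\mu)\sum_{i=0}^{j-1}(1+\delta)^i$ and the uniform boundedness of the finitely many coefficients. The paper packages this second step as a bound $L$ on the norms of the $\mu-1$ intermediate transition matrices, which is the same estimate you state.
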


\begin{proof}
We divide the proof into two parts. First, we show that 
\begin{equation}\label{e:Amo:Tx}
\lim\limits_{\substack{t\rightarrow\infty \\ t\in \mathbb{T}_\mu}} \binom{x(t)}{y(t)} = \binom{0}{0}.
\end{equation}
and next we show that this is also true for $t\in \mathbb{T}_\nu$.

The assumption $\lambda(\Psi^{\mu,1}(\mu,0))\subseteq B(0,1)$ implies that the spectral radius of $\Psi^{\mu,1}(\mu,0)$ satisfies
\[
\rho(\Psi^{\mu,1}(\mu,0)) = \max\{|\lambda_1|,|\lambda_2|\} < 1.
\]
Consequently, a standard argument (e.g., \cite[Theorem 4.4]{KP}) implies that for $\kappa$  with $\rho(\Psi^{\mu,1}(\mu,0))< \kappa <1$ there exists $K>0$ so that for each $t\in \{0,\mu, 2\mu, 3\mu, \ldots\}$ we have
\begin{equation}\label{e:Amo:Tx:estimate}
\left\lVert \binom{x(t)}{y(t)} \right\rVert = \left\lVert \binom{x(n\mu)}{y(n\mu)} \right\rVert \leq K \kappa^n \left\lVert \binom{x(0)}{y(0)} \right\rVert, \qquad (n\in\mathbb{N}).
\end{equation}
This implies that \eqref{e:Amo:Tx} holds.

Assume now that $t\notin \mathbb{T}_\mu$, i.e., $t=n\mu + m$, with $n\in\mathbb{N}$ and $m\in \{1,2,\ldots, \mu-1 \}$. Apparently, $x(n\mu)=x(n\mu+1)=x(n\mu+2)=\cdots=x(n\mu+\mu-1)$ and we have
\begin{align*}
y(n\mu+1) &= \gamma x(n\mu) + (1+\sigma) y(n\mu) \\
y(n\mu+2) &= \gamma x(n\mu+1) + (1+\sigma) y(n\mu+1) \\
	&= (\gamma + \gamma(1+\sigma)) x(n\mu) + (1+\sigma)^2 y(n\mu) \\
& \hspace*{1.2ex} \vdots \\
y(n\mu+m) &= \left(\sum_{i=0}^{m-1} \gamma (1+\delta)^i \right) x(n\mu) + (1+\sigma)^m y(n\mu).
\end{align*}
Consequently,  we can write
\[
\binom{x(t)}{y(t)} = \binom{x(n\mu+m)}{y(n\mu+m)}=
\left(\begin{array}{cc}
1 & 0 \\
\sum_{i=0}^{m-1} \gamma (1+\delta)^i & (1+\delta)^m
\end{array}
 \right)\binom{x(n\mu)}{y(n\mu)}.
\]

Then we have\footnote{For $M\in\mathbb{R}^{N\times N}$ we use the spectral matrix norm $\lVert M \rVert $
\[
\lVert M \rVert = \lVert M \rVert_{\mathop{spec}} := \sup \{\lVert Mx \rVert_2: \|x\|_2=1 \},
\]
which is equal to the largest singular value of the matrix $M$, see L\"{u}tkepohl \cite[Chapter 8]{L}.
}
\begin{align*}
\left\lVert \binom{x(t)}{y(t)} \right\rVert = \left\lVert \binom{x(n\mu+m)}{y(n\mu+m)} \right\rVert &\leq  \left\lVert \left(\begin{array}{cc}
1 & 0 \\
\sum_{i=0}^{m-1} \gamma (1+\delta)^i & (1+\delta)^m
\end{array} 
 \right) \right\rVert \cdot
 \left\lVert \binom{x(n\mu)}{y(n\mu)} \right\rVert \\
 & \leq L \left\lVert \binom{x(n\mu)}{y(n\mu)} \right\rVert,
\end{align*}
where $L$ is a constant defined by
\[
L = \max\limits_{m=1,2,\ldots, \mu-1} \left\lVert \left(\begin{array}{cc}
1 & 0 \\
\sum_{i=0}^{m-1} \gamma (1+\delta)^i & (1+\delta)^m
\end{array} 
 \right) \right\rVert.
\]

Consequently, the estimate \eqref{e:Amo:Tx:estimate} implies that
\[
\left\lVert \binom{x(t)}{y(t)} \right\rVert = \left\lVert \binom{x(n\mu+m)}{y(n\mu+m)} \right\rVert \leq 
L K \kappa^n \left\lVert \binom{x(0)}{y(0)} \right\rVert,
\]
which finishes the proof.
\end{proof}

The following counterexamples show that neither the asymptotic stability of $(\mu,\mu)$-dynamics \eqref{e:problem:2:equal} implies the asymptotic stability of $(\mu,1)$-dynamics \eqref{e:problem:2:mu:1} with the same parameters $\alpha, \beta, \gamma, \delta$, nor vice versa.

\begin{example}
If
\[
P= \left(
\begin{array}{cc}
 -\frac{1}{16} & \frac{1}{8} \\
 -\frac{1}{8} & -\frac{1}{16} \\
\end{array}
\right)
\]
and $\mu=7$, then the origin in the $(7,7)$-dynamics \eqref{e:problem:2:equal} is not asymptotically stable, since
\[
\Psi^{7,7}(7,0)= \left(
\begin{array}{cc}
 \frac{9}{16} & \frac{7}{8} \\
 -\frac{7}{8} & \frac{9}{16} \\
\end{array}
\right), 
\]
and both eigenvalues satisfy $| \lambda_{12}| \approx 1.04 >  1$. However, the origin in the $(7,1)$-dynamics \eqref{e:problem:2:mu:1} is asymptotically stable, since the eigenvalues of
\[
\Psi^{7,1}(7,0) = \left(
\begin{array}{cc}
 \frac{9}{16} & \frac{7}{8} \\
 -\frac{97\, 576\, 081}{134\, 217\, 728} & \frac{170\,859\,375}{268\,435\,456} \\
\end{array}
\right)
\]
satisfy $| \lambda_{12}| \approx 0.997 < 1$.
\end{example}

\begin{example}
If
\[
P= \left(
\begin{array}{cc}
 -\frac{1}{11} & \frac{1}{10} \\
 -\frac{2}{15} & \frac{1}{15} \\
\end{array}
\right)
\]
and $\mu=3$, then the origin in the $(3,3)$-dynamics \eqref{e:problem:2:equal} is asymptotically stable, since
\[
\Psi^{3,3}(3,0)= \left(
\begin{array}{cc}
 \frac{8}{11} & \frac{3}{10} \\
 -\frac{2}{5} & \frac{6}{5} \\
\end{array}
\right), 
\]
and both eigenvalues satisfy $| \lambda_{12}| \approx 0.996 <1$. However, the origin in the $(3,1)$-dynamics \eqref{e:problem:2:mu:1} is not asymptotically stable, since the eigenvalues of
\[
\Psi^{3,1}(3,0)= \left(
\begin{array}{cc}
 \frac{8}{11} & \frac{3}{10} \\
 -\frac{1\, 442}{3\, 375} & \frac{4\, 096}{3\,375} \\
\end{array}
\right), 
\]
satisfy $| \lambda_{12}| \approx 1.006 > 1$.
\end{example}

In the same spirit, neither the asymptotic stability of $(1,1)$-dynamics \eqref{e:problem:2:equal} implies the asymptotic stability of $(\mu,1)$-dynamics \eqref{e:problem:2:mu:1} with the same parameters $\alpha, \beta, \gamma, \delta$, nor vice versa.
\begin{example}
If
\[
P= \left(\begin{array}{cc}
 -8 & -1 \\
 \frac{1089}{40} & 4 \\
\end{array}
\right)
\]
 then the origin in the $(2,1)$-dynamics \eqref{e:problem:2:mu:1} is asymptotically stable, since the eigenvalues of
\[
\Psi^{2,1}(2,0)=\left(\begin{array}{cc}
 -17 & -2 \\
 \frac{1089}{8} & 16 \\
\end{array}
\right)
\]
are both equal to $-\frac{1}{2}$. At the same time, the origin is unstable in  the $(1,1)$-dynamics \eqref{e:problem:2:mu:1}, since one of the eigenvalues of
\[
\Psi^{1,1}(1,0)= P =\left(\begin{array}{cc}
 -8 & -1 \\
 \frac{1089}{40} & 4 \\
\end{array}
\right)
\]
is $\frac{1}{20} \left(-40-3 \sqrt{390}\right) \approx -4.96$.
\end{example}

\begin{example}
Finally, we can trivially observe that for
\[
P=\left(
\begin{array}{cc}
 -\frac{7}{4} & 0 \\
 0 & -\frac{7}{4} \\
\end{array}
\right)
\]
the origin of the $(\mu,1)$-dynamics, $\mu\in\mathbb{N}$, is stable if and only if $\mu=1$, since 
\[
\Psi^{\mu,1}(\mu,0) = \left(
\begin{array}{cc}
 1-\frac{7 \mu}{4} & 0 \\
 0 & \left(-\frac{3}{4}\right)^\mu \\
\end{array}
\right)
\]
and $1-\frac{7 \mu}{4} < -1$ for $\mu=2,3,\ldots$
\end{example}

\section{Commensurability case $\mu,\nu\in\mathbb{N}$}\label{sec:commensurable}
In this section we consider a general situation in which $\mu,\nu\in\mathbb{N}$ are not multiples of each other, i.e., $T=\mathrm{lcm}\{\mu,\nu\}>\max\{\mu,\nu\}$. We consider the following time scales
\begin{align*}
\mathbb{T}_\mu &= \left\lbrace 0, \mu, 2\mu, \ldots \right \rbrace, \\
\mathbb{T}_\nu &= \left\lbrace 0, \nu, 2\nu, \ldots \right \rbrace, \\
\mathbb{T} &= \mathbb{T}_\mu \cup \mathbb{T}_\nu, \\
\mathbb{T}_T &= \left\lbrace 0, T, 2T, \ldots \right \rbrace = \mathbb{T}_\mu \cap \mathbb{T}_\nu.
\end{align*}

For given parameters $P=\left(\begin{array}{cc}
\alpha & \beta \\
\gamma & \delta 
\end{array}
 \right)$ we can use the matrices $A_{ijkl}$ from Table \ref{t:9cases} to construct the evolution operator (see Corollaries \ref{c:matrices} and \ref{c:representation:matrix}):
\[
\Phi^{\mu,\nu} (T,0) = \prod_{m\in\T\cap[0,T)} A_{i_mj_mk_ml_m},
\] 
and the solution operator (matrix) $\Psi^{\mu,\nu} (T,0)$ defined by
\[
\Psi^{\mu,\nu} (T,0)\coloneqq
  \begin{pmatrix}
     \Phi^{\mu,\nu}_{11}(T,0) & \Phi^{\mu,\nu}_{13}(T,0)
  \\
     \Phi^{\mu,\nu}_{31}(T,0) & \Phi^{\mu,\nu}_{33}(T,0)
  \end{pmatrix}.
\]

We have the following result.

\begin{theorem}\label{thm:mu:nu}
Let $\mu, \nu\in \mathbb{N}$ and $\lambda( \Psi^{\mu,\nu} (T,0))\subseteq B(0,1)$. Then $o$ is a globally asymptotically stable solution of \eqref{e:problem:2}, i.e., every solution of \eqref{e:problem:2} satisfies
\[
\lim\limits_{t\rightarrow\infty} x(t) = 0 = \lim\limits_{t\rightarrow\infty} y(t).
\]
\end{theorem}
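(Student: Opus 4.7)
The plan is to exploit the $T$-periodicity of the time-scale structure $\T = \T_\mu \cup \T_\nu$ in two steps: first obtain exponential decay of $(x(kT), y(kT))$ as $k \to \infty$ by viewing $\Psi^{\mu,\nu}(T,0)$ as a time-$T$ map, and then bound the values at intermediate times $t \in [kT, (k+1)T] \cap \T$ in terms of $(x(kT), y(kT))$ uniformly in $k$. By the Remark following Corollary \ref{c:representation:matrix}, $\Psi^{\mu,\nu}(kT, 0) = (\Psi^{\mu,\nu}(T, 0))^{k}$ for $k \in \N$, so
\[
   \binom{x(kT)}{y(kT)} = \bigl(\Psi^{\mu,\nu}(T, 0)\bigr)^{k} \binom{x_0}{y_0}.
\]
The hypothesis $\lambda(\Psi^{\mu,\nu}(T, 0)) \subset B(0,1)$ gives $\rho(\Psi^{\mu,\nu}(T, 0)) < 1$, and the standard norm estimate used in the proof of Theorem \ref{thm:mu:1} yields constants $K > 0$ and $\kappa \in (\rho(\Psi^{\mu,\nu}(T, 0)), 1)$ with $\|(x(kT), y(kT))\| \leq K \kappa^k \|(x_0, y_0)\|$; in particular $(x(kT), y(kT)) \to (0,0)$ exponentially.

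The second step is to show that $(x(t), y(t))$ for $t \in \T \cap (kT, (k+1)T]$ is a linear function of $(x(kT), y(kT))$ alone, \emph{not} of $(\underline{x}(kT), \underline{y}(kT))$. Because $kT \in \T_\mu \cap \T_\nu$, rules (b1) and (d1) of Theorem \ref{thm:representation} force $\underline{u}(\sigma(kT)) = u(kT) = x(kT)$ and $\underline{v}(\sigma(kT)) = v(kT) = y(kT)$, while rules (a) and (c) express $u(\sigma(kT))$ and $v(\sigma(kT))$ linearly in $(x(kT), y(kT))$. Inducting through the finitely many successor times in $\T \cap (kT, (k+1)T]$ by means of (a)--(d), the 4D state $z(t) := (u, \underline{u}, v, \underline{v})(t)$ remains a linear function of $(x(kT), y(kT))$, since every underline value appearing on the right-hand side of (a)--(d) past $\sigma(kT)$ has already been expressed in these two quantities.

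By the $T$-periodicity of the coefficient matrix $A(\cdot)$ from Corollary \ref{c:matrices}, the resulting linear map depends only on $s := t - kT$ and not on $k$, so there are matrices $M_s \in \R^{2\times 2}$, $s \in \T \cap [0, T)$, with
\[
   \binom{x(kT+s)}{y(kT+s)} = M_s \binom{x(kT)}{y(kT)}.
\]
Since $\T \cap [0, T)$ is a finite set, $L := \max_{s} \|M_s\|$ is finite, and combining with the first step yields $\|(x(t), y(t))\| \leq L K \kappa^{k} \|(x_0, y_0)\| \to 0$ as $t \to \infty$ (equivalently $k \to \infty$). Restricting to $\T_\mu$ and $\T_\nu$ then gives $x(t) \to 0$ on $\T_\mu$ and $y(t) \to 0$ on $\T_\nu$, as claimed.

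The main technical point is the induction in the second step, namely verifying that the underline components at $kT$ are irrelevant for the subsequent evolution. It is a routine case analysis through the nine cases of Table \ref{t:9cases}, but it is the key that lets us work directly with the 2D hypothesis on $\Psi^{\mu,\nu}(T,0)$ rather than with a 4D spectral-radius condition on $\Phi^{\mu,\nu}(T,0)$.
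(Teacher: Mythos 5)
Your proposal is correct and follows essentially the same route as the paper: exponential decay of $(x(kT),y(kT))$ via the spectral radius of the time-$T$ map $\Psi^{\mu,\nu}(T,0)$, followed by a uniform bound $L=\max_{s\in\T\cap(0,T)}\|\Psi^{\mu,\nu}(s,0)\|$ on the finitely many intermediate times in each period. Your explicit induction showing that the underline components are reset at each $kT\in\T_\mu\cap\T_\nu$ (so that the intermediate states depend linearly on $(x(kT),y(kT))$ alone, and $M_s=\Psi^{\mu,\nu}(s,0)$) is a detail the paper leaves implicit, and it is verified correctly.
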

\begin{proof}
The proof follows the ideas of the proof of Theorem \ref{thm:mu:1}.

First, we show that
\begin{equation}\label{e:Amn:TT}
\lim\limits_{\substack{t\rightarrow\infty \\ t\in \mathbb{T}_T}} \binom{x(t)}{y(t)} = \binom{0}{0}.
\end{equation}
First, we choose $\kappa$ such that $\rho(\Psi^{\mu,\nu}(T,0))< \kappa <1$. Then, there exists $K>0$ so that for each $t\in \T_T\cap\mathbb{N}$ we have
\begin{equation}\label{e:Amn:Tx:estimate}
\left\lVert \binom{x(t)}{y(t)} \right\rVert = \left\lVert \binom{x(nT)}{y(nT)} \right\rVert \leq K \kappa^n \left\lVert \binom{x(0)}{y(0)} \right\rVert, \quad n\in\mathbb{N},
\end{equation}
which implies \eqref{e:Amn:TT}.

Next, we focus on the values of $x(t)$ and $y(t)$ on the intervals $(mT,(m+1)T)\cap\mathbb{T}$ for some $m\in\mathbb{N}$. Since the evolution operator $\Phi^{\mu,\nu}$ is defined as a product of matrices $A_{ijkl}$ from Table \ref{t:9cases} and the solution operator $\Phi^{\mu,\nu}$ as the submatrix of $\Psi^{\mu,\nu}$, we observe that with the constant
\[
L = \max\limits_{t\in\T\cap(0,T)} \| \Psi^{\mu,\nu}(t,0) \|,
\]
we have for all $t\in\T\cap(0,T)$ and $n\in\mathbb{N}$,
\[
\left\lVert \binom{x(nT+t)}{y(nT+t)} \right\rVert \leq L \left\lVert \binom{x(nT)}{y(nT)} \right\rVert.
\]
Employing the estimate \eqref{e:Amn:Tx:estimate} we get
\[
\left\lVert \binom{x(nT+t)}{y(nT+t)} \right\rVert \leq 
L K \kappa^n \left\lVert \binom{x(0)}{y(0)} \right\rVert,
\]
and the proof is complete.
\end{proof}

Before we illustrate Theorem \ref{thm:mu:nu} we introduce the notion of dynamically equivalent asynchronous dynamical systems.

\begin{definition}
Let $\mu,\nu,\hat{\mu},\hat{\nu}$ be commensurable, i.e., there exists $T=\mathrm{lcm}\lbrace \mu,\nu,\hat{\mu},\hat{\nu} \rbrace $. We say that a $(\mu,\nu)$-asynchronous discrete dynamical system with parameters $P$ and a $(\hat{\mu},\hat{\nu})$-discrete dynamical system with parameters $\hat{P}$ are \emph{dynamically equivalent} if the solution operators satisfy
\[
\Psi^{\mu,\nu}(T,0) = \hat{\Psi}^{\hat{\mu},\hat{\nu}}(T,0).
\]
\end{definition}

\begin{example}\label{x:dynamical:equivalence}
The following two asynchronous discrete dynamical systems are dynamically equivalent because in both cases they lead to the dynamics with solution operator
\[
\Psi^{\mu,\nu}(6,0)=\left(
\begin{array}{cc}
 -\frac{7}{10} & \frac{1}{10} \\
 -\frac{9}{16} & \frac{29}{80}
\end{array}
\right).
\]
Since $\lambda(\Psi^{\mu,\nu}(6,0))=\left\{\frac{1}{160} \left(-27-\sqrt{5785}\right),\frac{1}{160} \left(\sqrt{5785}-27\right)\right\}\approx\{-0.644,0.306\}$, the origin is asymptotically stable in both cases.
\begin{itemize}
\item[(a)] \textbf{$(2,3)$-asynchronous discrete dynamics}. If we consider parameters
\[
P=\begin{pmatrix}
    \alpha & \beta \\
    \gamma & \delta 
  \end{pmatrix} =  \left(
\begin{array}{cc}
 -1 & \frac{1}{5} \\
 \frac{1}{4} & -\frac{1}{4} \\
\end{array}
\right),
\]
\begin{figure}
\begin{center}
\includegraphics[width=10cm]{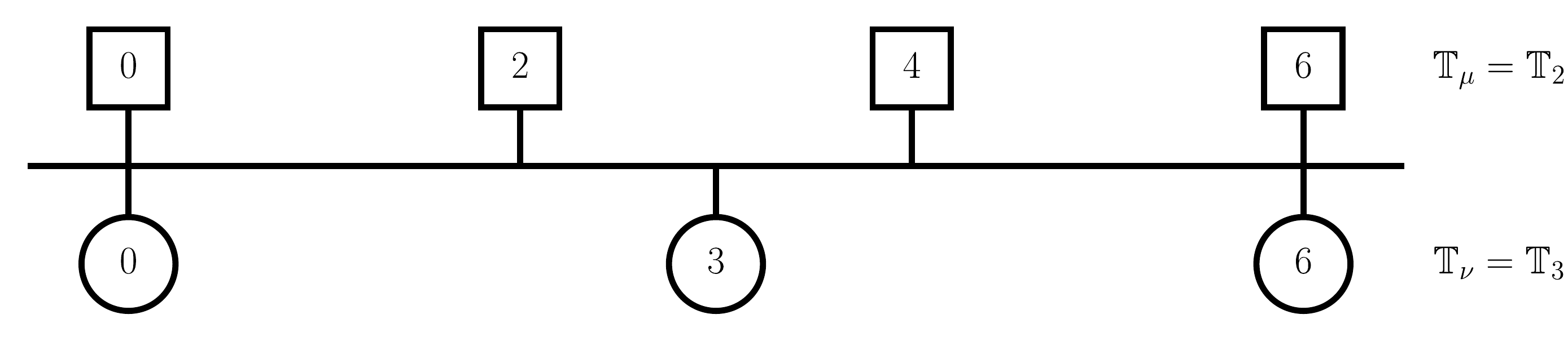}
\includegraphics[width=10cm]{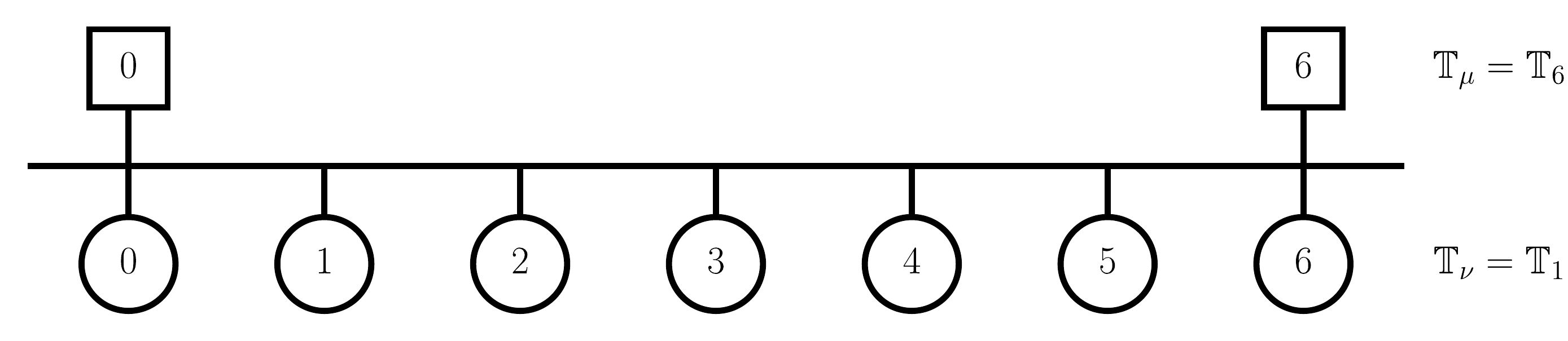}
\vspace*{-2ex}
\end{center}
\caption{Time scales related to dynamically equivalent (2,3)- and (6,1)-asynchronous discrete dynamical systems from Example \ref{x:dynamical:equivalence}.}\label{f:dynamical:equivalence:ts}
\end{figure}

we have in this case (see Figure \ref{f:dynamical:equivalence:ts} and Table \ref{t:9cases}) the evolution operator
\begin{align*}
\Phi^{2,3}(6,0)&=A_{1101}A_{0110}A_{1001}A_{1110}\\ &=\left(
\begin{array}{cccc}
 (2 \alpha +1)^3+6 \beta  \gamma  & 0 & (2 \alpha +1) (2 (2 \alpha +1) \beta +2 \beta )+2 \beta  (3 \delta +1) & 0 \\
 (2 \alpha +1)^2 & 0 & 2 (2 \alpha +1) \beta +2 \beta  & 0 \\
 3 (2 \alpha +1) \gamma +3 (3 \delta +1) \gamma  & 0 & (3 \delta +1)^2+6 \beta  \gamma  & 0 \\
 3 \gamma  & 0 & 3 \delta +1 & 0 \\
\end{array}
\right).
\end{align*}
Consequently, the solution operator is
\begin{align*}
\Psi^{2,3}(6,0)&=\left(
\begin{array}{cc}
 (2 \alpha +1)^3+6 \beta  \gamma  & (2 \alpha +1) (2 (2 \alpha +1) \beta +2 \beta )+2 \beta  (3 \delta +1) \\
 3 (2 \alpha +1) \gamma +3 (3 \delta +1) \gamma  & (3 \delta +1)^2+6 \beta  \gamma  \\
\end{array} \right) \\
&=\left(
\begin{array}{cc}
 -\frac{7}{10} & \frac{1}{10} \\
 -\frac{9}{16} & \frac{29}{80} \\
\end{array}
\right).
\end{align*}
\item[(b)] \textbf{$(6,1)$-asynchronous discrete dynamics}. If we consider parameters
\[
\displaystyle \hat{P}=\begin{pmatrix}
    \hat{\alpha} & \hat{\beta} \\
    \hat{\gamma} & \hat{\delta }
  \end{pmatrix} =  \begin{pmatrix}
 -\frac{17}{60} & \frac{1}{60} \\[1ex]
 -\frac{\frac{-9}{16}}{\sum_{i=0}^5  (\sqrt[6]{\frac{29}{60}})^i} & -1+\sqrt[6]{\frac{29}{60}} \\
\end{pmatrix}\approx \begin{pmatrix}
 -.283 & .017 \\
 -.137 & -.156 \\
\end{pmatrix},
\]
then we have (see Figure \ref{f:dynamical:equivalence:ts} and Table \ref{t:9cases})
\begin{align*}
\hat{\Phi}^{6,1}(6,0)&=\hat{A}_{0111}(\hat{A}_{0011})^4 \hat{A}_{1011}
=\begin{pmatrix}
 1+6 \hat{\alpha} & 0 & 6 \hat{\beta}  & 0 \\
 1 & 0 & 0 & 0 \\
 \hat{\gamma}\sum_{i=0}^5  (1+\hat{\delta})^i & 0 & (1+\hat{\delta} )^6 & 0 \\
  \hat{\gamma}\sum_{i=0}^4  (1+\hat{\delta})^i & 0 & (1+\hat{\delta} )^5 & 0 \\
\end{pmatrix}.
\end{align*}
Consequently, the solution operator is
\begin{align*}
\hat{\Psi}^{6,1}(6,0)&=
\begin{pmatrix}
 1+6 \hat{\alpha} &  6 \hat{\beta} \\
 \hat{\gamma}\sum_{i=0}^5  (1+\hat{\delta})^i & (1+\hat{\delta} )^6  \\
\end{pmatrix} =
\begin{pmatrix}
 -\frac{7}{10} & \frac{1}{10} \\
 -\frac{9}{16} & \frac{29}{80} \\
\end{pmatrix}.
\end{align*}
\end{itemize}
Observant readers may have noted that the parameters $\hat{P}$ can be derived backwards so that the solution matrices $\Psi^{2,3}(6,0)$ and $\hat{\Psi}^{6,1}(6,0)$ have the same form. Similarly, one could find parameters sets $\bar{P}$ for, e.g., dynamically equivalent $(1,1)$-, $(2,1)$-, $(3,2)$-, $(1,3)$-asynchronous discrete dynamical systems.
\end{example}

\begin{remark}
Note that the notion of dynamical equivalence of asynchronous discrete dynamical systems could have been alternatively introduced via the induced time-$1$ dynamics. $(\mu,\nu)$-asynchronous discrete dynamical system with parameters $P$ and a $(\hat{\mu},\hat{\nu})$-discrete dynamical system with parameters $\hat{P}$ are \emph{dynamically equivalent} if the induced time-1 operators defined by \eqref{e:timeone} are equal, i.e.,
\[
\Psi^{\mu,\nu}(1,0)=\hat{\Psi}^{\hat{\mu},\hat{\nu}}(1,0).
\]

Note that in Example \ref{x:dynamical:equivalence} both asynchronous discrete dynamical systems are associated with the complex time-1 solution operator
\[
\Psi(1,0)=\sqrt[6]{\Psi^{\mu,\nu}(6,0)}\approx \left(
\begin{array}{cc}
 0.804\, +0.492 i & 0.002\, -0.049 i \\
 -0.01+0.275 i & 0.822\, -0.027 i \\
\end{array}
\right).
\]
\end{remark}

\section{Final remarks}\label{sec:final:remarks}
Our ideas can in principle be extended to dynamical systems with more equations, e.g., 3 asynchronous discrete equations. Naturally, such a process could be computationally demanding.

However, there are two essential questions which remain open even in the case of two asynchronous equations \eqref{e:problem:2}.

First, note that the most general case we have studied was the case of commensurable $\mu,\nu$, i.e., the situation in which there exists $T=\mathrm{lcm}\{\mu,\nu\}.$ However, the cornerstone of our approach, the construction of a solution operator $\Psi(T,0)$ on the intersection time scale $\mathbb{T}_T=\T_\mu\cap \T_\nu$ cannot be used in the situation when $\mu, \nu\in\mathbb{R}^+$ are incommensurable (for example $\mu=1$ and $\nu=\pi$, etc.). In this case, there is no least common multiple $T$. The open question is how to study such $(\mu,\nu)$-asynchronous discrete dynamical systems. Under which condition is the origin of a $(\mu,\nu)$-asynchronous discrete dynamical system with incommensurable $\mu,\nu$ asymptotically stable?

The second question is motivated by counterexamples in Section \ref{s:subset} where we showed that for a given set of parameters $P$ the asymptotic stability of origin in $(\mu,1)$-asynchronous discrete dynamical systems, $\mu\in\mathbb{N}$ neither implies nor is implied by the asymptotic stability of the origin of $(\mu,\mu)$- or $(1,1)$-synchronous discrete dynamical system. Under which assumptions does the asymptotic stability of the origin in $(\mu,\nu)$-dynamics imply the asymptotic stability of the origin in $(\hat{\mu},\hat{\nu})$-dynamics?

From the point of view of applications, there are also natural questions. We can illustrate one of the key ones by our little Example \ref{x:VAR}. In the case of macroeconomic time series, can we show that in some specific instances, a variant of our asynchronous model \eqref{e:problem:2} explains the real asynchronous time series better than standard synchronous fiscal models \eqref{eq:synchronous:fiscal}? Naturally, asynchronous systems would create a realm of questions in econometrics related to the estimation of parameters, etc.

\subsubsection*{Acknowledgements}
The authors are grateful to Michal Franta, Jan Libich and Eduard Rohan for their insights from econometrics, economics and computational mechanics. PS acknowledges the support of the project LO1506 of the Czech Ministry of Education, Youth and Sports under the program NPU I.

{

}

\newpage
\begin{table}
\begin{tabularx}{\textwidth}{cccccc}
\hline\hline
{\footnotesize ${ i:=\indicator_{\Tx}(t)}$} & {\footnotesize${ j:=\indicator_{\Tx}(\sigma(t))}$} & {\footnotesize ${ k:=\indicator_{\Ty}(t)}$} & {\footnotesize ${ \ell:=\indicator_{\Ty}(\sigma(t))}$} & Pictogram & $A_{ijk\ell}$ \\ \hline
\\[-5mm]
1 & 1 & 1 & 1 &
\begin{tabular}{@{}c@{}} \\ \includegraphics[scale=.15]{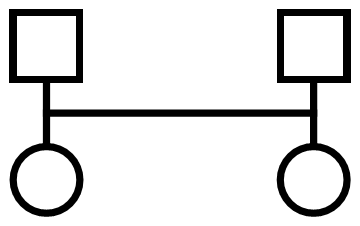}\end{tabular} & { $\begin{psmallmatrix}
 1+\mu\alpha & 0 & \mu\beta  & 0 \\
 1 & 0 & 0 & 0 \\
 \nu\gamma  & 0 & 1+\nu\delta & 0 \\
 0 & 0 & 1 & 0 \\
\end{psmallmatrix}=:A_{1111}$} \\

1 & 1 & 1 & 0 & \begin{tabular}{@{}c@{}} \\ \includegraphics[scale=.15]{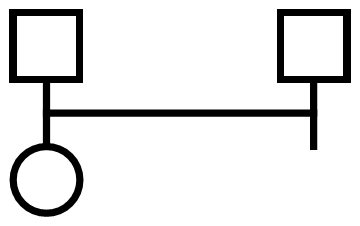}\end{tabular} &
$\begin{psmallmatrix}
 1+\mu\alpha & 0 & \mu\beta  & 0 \\
 1 & 0 & 0 & 0 \\
 0 & 0 & 1 & 0 \\
 0 & 0 & 1 & 0 \\
\end{psmallmatrix}=:A_{1110}$ \\

1 & 1 & 0 & 1 &\begin{tabular}{@{}c@{}} \\ \includegraphics[scale=.15]{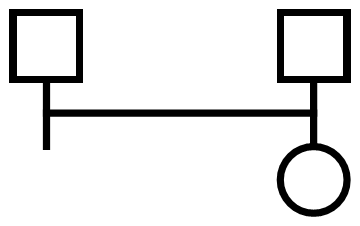}\end{tabular} &
$\begin{psmallmatrix}
 1+\mu\alpha & 0 & 0 & \mu\beta  \\
 1 & 0 & 0 & 0 \\
 0 & \nu\gamma  & 1+\nu\delta & 0 \\
 0 & 0 & 0 & 1 \\
\end{psmallmatrix}=:A_{1101}$ \\

1 & 1 & 0 &0 &\begin{tabular}{@{}c@{}} \\ \includegraphics[scale=.15]{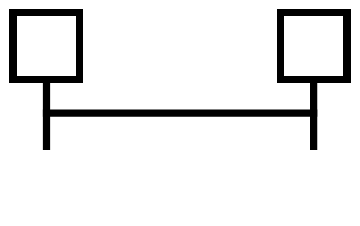}\end{tabular} &
 $\begin{psmallmatrix}
 1+\mu\alpha & 0 & 0 & \mu\beta  \\
 0 & 1 & 0 & 0 \\
 0 & 0 & 1 & 0 \\
 0 & 0 & 0 & 1 \\
\end{psmallmatrix}=:A_{1100}$ \\

1 & 0 & 1 & 1 & \begin{tabular}{@{}c@{}} \\ \includegraphics[scale=.15]{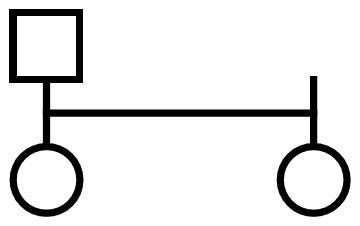}\end{tabular} &
$\begin{psmallmatrix}
 1 & 0 & 0 & 0 \\
 1 & 0 & 0 & 0 \\
 \nu\gamma  & 0 & 1+\nu\delta & 0 \\
 0 & 0 & 1 & 0 \\
\end{psmallmatrix}=:A_{1011}$ \\

1 & 0 & 0 & 1 & \begin{tabular}{@{}c@{}} \\ \includegraphics[scale=.15]{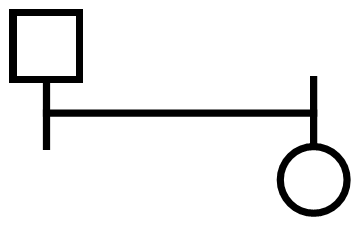}\end{tabular} &
$\begin{psmallmatrix}
 1 & 0 & 0 & 0 \\
 1 & 0 & 0 & 0 \\
 0 & \nu\gamma  & 1+\nu\delta & 0 \\
 0 & 0 & 0 & 1 \\
\end{psmallmatrix}=:A_{1001}$ \\

0 & 1 & 1 & 1 &\begin{tabular}{@{}c@{}} \\ \includegraphics[scale=.15]{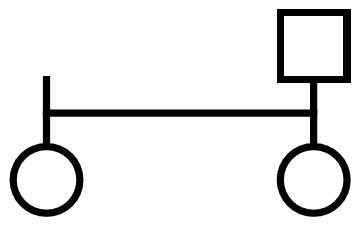}\end{tabular} &
$\begin{psmallmatrix}
 1+\mu\alpha & 0 & 0 & \mu\beta  \\
 0 & 1 & 0 & 0 \\
 0 & \nu\gamma  & 1+\nu\delta & 0 \\
 0 & 0 & 1 & 0 \\
\end{psmallmatrix}=:A_{0111}$ \\

0 & 1 & 1 & 0 &\begin{tabular}{@{}c@{}} \\ \includegraphics[scale=.15]{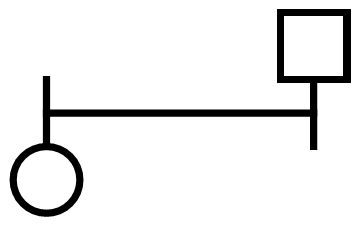}\end{tabular} &
$\begin{psmallmatrix}
 1+\mu\alpha & 0 & 0 & \mu\beta  \\
 0 & 1 & 0 & 0 \\
 0 & 0 & 1 & 0 \\
 0 & 0 & 1 & 0 \\
\end{psmallmatrix}=:A_{0110}$ \\

0 & 0 & 1 & 1 &\begin{tabular}{@{}c@{}} \\ \includegraphics[scale=.15]{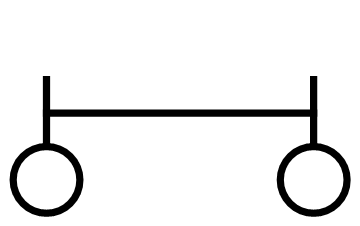}\end{tabular} &
$\begin{psmallmatrix}
 1 & 0 & 0 & 0 \\
 0 & 1 & 0 & 0 \\
 0 & \nu\gamma  & 1+\nu\delta & 0 \\
 0 & 0 & 0 & 1 \\
\end{psmallmatrix}=:A_{0011}$

\\[7ex] \hline\hline
\end{tabularx}
\caption{9 possible forms of the one-step evolution operator $A(t)$, $t,\sigma(t)\in\T$ associated with the system \eqref{(a)}, see Corollary \ref{c:representation:matrix}. The pictograms illustrate each quadruple $(i,j,k,\ell) = \big(\indicator_{\T_{\mu}}(t), \indicator_{\T_{\mu}}(\sigma(t)), \indicator_{\T_{\nu}}(t), \indicator_{\T_{\nu}}(\sigma(t))\big)$, squares correspond to $\Tx$, circles to $\Ty$, the left symbols to time $t\in\T$ and the right symbols to $\sigma(t)\in\T$. \label{t:9cases}}
\end{table}

\end{document}